\documentclass[12pt]{amsart}

\usepackage{amsmath,amssymb,amsthm}
\usepackage{cite}
\usepackage[margin=1in]{geometry}
\usepackage{subcaption}
\usepackage[dvipsnames]{xcolor}
\usepackage{longtable}
\usepackage{verbatim} 
\usepackage{ytableau}
\ytableausetup{boxsize=2em}
\usepackage{booktabs}
\usepackage{tikz}
\usetikzlibrary{positioning}
\usepackage{cleveref}

\DeclareMathOperator{\inn}{in}
\DeclareMathOperator{\out}{out}
\DeclareMathOperator{\col}{col}

\DeclareMathOperator{\leftof}{left}
\DeclareMathOperator{\row}{row}

\newcommand{\met}[1]{\langle #1\rangle}

\DeclareSymbolFont{sfoperators}{OT1}{cmss}{m}{n}
\DeclareSymbolFontAlphabet{\mathsf}{sfoperators}

\makeatletter
\def\operator@font{\mathgroup\symsfoperators}
\makeatother

\renewcommand{\c}[2]{ 
    \ifcase#1
    \or \textcolor{BrickRed}{#2}
    \or \textcolor{ForestGreen}{#2}
    \or \textcolor{RoyalBlue}{#2}
    \or \textcolor{Thistle}{#2}
    \or \textcolor{black}{#2}
    \or \textcolor{gray}{#2}
    \or \textcolor{Tan}{#2}
    \fi
}
\newcommand{\stack}[5]{
    \begin{tikzpicture}[scale=0.6]
        \draw[thick] (-0.5,0) -- (1,0) -- (1,-2) -- (2,-2) -- (2,0) -- (3.5,0);
        \node[fill = white, draw = white] at (2.8,.5) {$#5$};
        \node[fill = white, draw = white] at (1.5,-1.5) {$#4$};
        \node[fill = white, draw = white] at (1.5,-.8) {$#3$};
        \node[fill = white, draw = white] at (1.5,-.1) {$#2$};
        \node[fill = white, draw = white] at (.2,.5) {$#1$};
    \end{tikzpicture}
}
\newcommand{\arrow}{
    \!\!\!\!\!\!\!
    \begin{tikzpicture}[scale=0.6]
        \node at (0,0) {};
        \node at (0,0.7) {$\longrightarrow$};
    \end{tikzpicture}
    \!\!\!\!\!\!\!
}

\theoremstyle{definition}
\newtheorem{theorem}{Theorem}[section]
\newtheorem{example}[theorem]{Example}
\newtheorem{lemma}[theorem]{Lemma}
\newtheorem{corollary}[theorem]{Corollary}
\newtheorem{definition}[theorem]{Definition}
\newtheorem{proposition}[theorem]{Proposition}
\newtheorem{conjecture}[theorem]{Conjecture}
\newtheorem{remark}[theorem]{Remark}

\title{Asymptotics of the Average Stack-Sorting Depth}
\author{Jerry Zhang}
\address{\textsc{J. Zhang}, South Pasadena High School, South Pasadena, CA, 91030}
\email{jerrylezhang@gmail.com}

\begin{document}

\begin{abstract}
Let $\mathcal{D}_n$ denote the average number of passes of the stack-sorting map $s$ required to sort a permutation in $S_n$. We use the recently introduced framework of stack-sorting diagrams and tableaux to prove that the limit $\lim_{n\to\infty}\mathcal{D}_n/n$ exists. This resolves a longstanding conjecture of West originally proposed in $1990$. As a consequence, we also provide a monotonically increasing sequence that converges to $\lim_{n\to\infty}\mathcal{D}_n/n$, improving upon Defant's lower bound of $\lambda\approx 0.62433$.
\end{abstract}

\maketitle

\section{Introduction}
West \cite{west} introduced the stack-sorting map $s$ in 1990 as a deterministic procedure on permutations. The input permutation $\pi$ is read from left to right, and at each step, the next entry of $\pi$ is pushed into the stack if either the stack is empty or the top element of the stack is greater than the next entry. If the next entry of $\pi$ cannot be pushed into the stack, then the top element of the stack is popped out and appended to the output permutation. The procedure concludes when all entries have been appended to the output permutation. Figure \ref{fig:westmap} illustrates the map on $\pi=1324$. Note that each application of $s$ sends the greatest unsorted entry to its correct position. Thus, at most $n-1$ applications of $s$ are required to sort a permutation with $n$ elements. The \emph{stack-sorting depth} $\met{\pi}$ is the minimum number of passes required to sort $\pi$.
\begin{figure}[h]
    \begin{center}
        \stack{}{}{}{}{1324}\arrow
        \stack{}{}{}{1}{324}\arrow
        \stack{1}{}{}{}{324}\arrow
        \stack{1}{}{}{3}{24}\arrow
        \stack{1}{}{2}{3}{4}\arrow
        \stack{12}{}{}{3}{4}\arrow
        \stack{123}{}{}{}{4}\arrow
        \stack{123}{}{}{4}{}\arrow
        \stack{1234}{}{}{}{}
    \end{center}
    \caption{West's stack-sorting map applied on $\pi = 1324$.}
    \label{fig:westmap}
\end{figure}

Define the average stack-sorting depth over $S_n$ to be
\[\mathcal{D}_n:=\frac{1}{n!}\sum_{\pi\in S_n} \met{\pi}.\]
West \cite{west} provided the first asymptotic lower bound for $\mathcal{D}_n$ with a pattern avoidance argument, and he conjectured that $\mathcal{D}_n/n$ converges as $n$ approaches infinity.
\begin{theorem}[West]
\[\liminf_{n\to\infty}\frac{\mathcal{D}_n}{n}\ge 0.23.\]
\end{theorem}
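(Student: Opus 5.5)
Since the statement is attributed to West and proved there by a pattern-avoidance argument, I would reproduce a lower bound of that type. The basic tool is the standard fact that a single pass of $s$ moves the largest entry $n$ to the end, and more generally an entry $a$ can only move right past larger entries that lie to its right when they are released. The quantitative version I would use: if $\pi$ contains entries $a<b$ with $b$ to the left of $a$, then $a$ is the left end of a descent-like obstruction. Concretely, if an entry $j$ has $k$ entries larger than $j$ to its left, arranged so that each pass can clear at most one of them past $j$, then $\met{\pi}\ge k$. The cleanest instance is the pattern $k(k-1)\cdots 1$ preceding a small entry, and more usefully the statistic
\[
t_j(\pi)=\#\{\,i<\pi^{-1}(j)\;:\;\pi_i>j\ \text{and}\ \pi_i \text{ is a left-to-right maximum among entries}>j\,\},
\]
which bounds the depth from below. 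Indeed, in one pass of $s$, among the entries larger than $j$ sitting to the left of $j$, at most one "record" can jump over $j$: while $j$ is in the stack, the entries above it are popped in increasing order.

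\textbf{Step 1.} Prove the lemma $\met{\pi}\ge \max_j t_j(\pi)$. I would take $t_j$ as the length of the longest decreasing subsequence of entries larger than $j$ located left of $j$, and show by induction on passes that each application of $s$ decreases this quantity by at most one. This is a direct case analysis of the stack dynamics: two entries $x>y>j$ with $x$ left of $y$ left of $j$ cannot both end up right of $j$ after one pass, since $y$ is popped before $j$ is pushed past it.

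\textbf{Step 2.} Estimate this quantity for a random permutation. For a fixed position near the right end, say the entry at position $\lfloor(1-\alpha)n\rfloor$ with value $\approx\beta n$, the entries to its left that exceed it form a uniformly random sequence of about $(1-\alpha)(1-\beta)n$ elements. A longest decreasing subsequence of a random sequence of length $m$ is only of order $2\sqrt m$, which is too small. So instead of a decreasing subsequence I would count the entries larger than $j$ that must each separately cross $j$. I would refine Step 1 into the statement that the number of passes is at least the number of entries greater than $j$ to its left minus the number that can "ride along" together, and bound the number of those riding together via consecutive increasing runs. Then I optimize over $\alpha,\beta$ and use concentration (a second-moment argument, or Azuma's inequality for the count) to get, with probability $1-o(1)$, $\met{\pi}\ge (c-o(1))n$. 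Finally I average to conclude $\liminf \mathcal D_n/n\ge c$ and check numerically that $c\ge 0.23$.

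\textbf{Main obstacle.} The hard part is Step 2's refinement: finding a linear-size obstruction, since pure decreasing-pattern bounds give only $\sqrt n$. I expect the correct statistic to be of the following kind. Each pass moves an entry left past at most... no; rather, each pass reduces by at most one the number of larger entries that are left of $j$ and not adjacent in a suitable increasing chain. Verifying this monotonicity rigorously, and then computing its typical value to reach the constant $0.23$, is where I would spend the effort. First I would try the statistic "number of entries $>j$ to the left of $j$ that are left-to-right minima of the prefix restricted to values $>j$".
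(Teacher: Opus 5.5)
Your proposal has a genuine gap, and Step 1 as written is false. In one pass of $s$, an entry $b>j$ lying left of $j$ ends up right of $j$ exactly when $b$ is still in the stack when $j$ is read. That happens precisely when no entry larger than $b$ lies between $b$ and $j$. So the entries that jump over $j$ in a single pass are the right-to-left maxima (exceeding $j$) of the prefix before $j$. These form a \emph{decreasing} sequence, and they all cross at once.

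Hence the longest decreasing subsequence of larger entries left of $j$ is not a lower bound. For $\pi=321$ and $j=1$ it equals $2$, yet $s(321)=123$. Your mechanism (``$y$ is popped before $j$ is pushed past it'') is backwards: $y$ is popped before $j$ arrives only if something larger than $y$ intervenes.

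The correct pattern obstruction is \emph{increasing}. Suppose $a_1<\cdots<a_k$ occur left to right before some $b<a_1$. Each $a_i$ with $i<k$ is popped by $a_{i+1}$ before $b$ is read, and $s$ preserves non-inversions. So a chain of length $k-1$ survives each pass, giving $\met{\pi}\ge k$; this is West's $23\cdots(k+1)1$ pattern. Your first definition of $t_j$ (records among the larger entries) is of this increasing type and is a valid bound. Your final suggestion (left-to-right minima) is again the wrong direction; $321$ refutes it too.

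Even after this repair, you do not reach a linear bound. As you note yourself, records give $\Theta(\log n)$ and longest increasing subsequences give $\Theta(\sqrt n)$ in a random permutation. Step 2 never specifies the statistic that is supposed to be linear. It also never proves that statistic drops by at most one per pass, and never computes the constant $0.23$. That missing obstruction is the entire content of the theorem, so as it stands there is no proof. The concentration step is also unnecessary: a lower bound on the expectation already controls $\liminf_n\mathcal{D}_n/n$.

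For comparison, the paper does not reprove West's bound. It is superseded by Defant's $\lambda\approx 0.624$ and by the paper's own results, which avoid per-permutation obstructions entirely and work in expectation. \Cref{thm:increment} together with $t(\beta)\ge w(\beta)$ gives $\mathcal{D}'_n\ge\frac{n+1}{n}\mathcal{D}'_{n-1}$. Therefore $\mathcal{D}'_{n-1}/n\ge \mathcal{D}'_1/2=1/2$ for all $n\ge 2$. \Cref{prop:defant} then transfers this to $\liminf_{n}\mathcal{D}_n/n\ge 1/2>0.23$.
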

In 2020, Defant~\cite{defant2} improved upon West's lower bound and proved the first nontrivial upper bound.
\begin{theorem}[Defant]
\[\lambda\le \liminf_{n\to\infty}\frac{\mathcal{D}_n}{n}\le \limsup_{n\to\infty}\frac{\mathcal{D}_n}{n}\le \frac{3}{5}(7-8\ln 2)\]
where $\lambda$ is the Golomb--Dickman constant.
\end{theorem}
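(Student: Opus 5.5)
Both bounds come from a single combinatorial fact about the depth: $\met{\pi}$ is governed by how entries sit to the left of larger entries. Write $\pi$ as $L\,n\,R$. One pass of $s$ outputs $s(L)\,s(R)\,n$, so $n$ lands in its final position at once, and the entries of $R$ are never blocked by entries of $L$ in later passes. Iterating this, an entry $a$ is "in place" after $t$ passes exactly when every entry smaller than $a$ that must pass to its left has done so. This gives the classical characterization (West, Defant): $\met{\pi}$ is the maximum, over entries $a$, of the number of passes needed before no entry larger than $a$ lies to the left of $a$.

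\textbf{Lower bound.} Let $a$ be an entry and let $k$ be the number of entries larger than $a$ to its left that form a decreasing-right-to-left-maxima chain blocking it. Each pass moves at most one such blocker, namely the largest unsorted one, past $a$. Hence $\met{\pi}\ge n-\ell$, where $\ell$ is chosen as follows: if the entries $n,n-1,\dots,\ell+1$ already appear in increasing left-to-right order at the right end in the relevant sense, the depth is at least the number of entries that are out of this tail structure. Concretely, I would use the known bound that $\met{\pi}$ is at least $n$ minus the length of the longest terminal segment $n-j+1,\dots,n$ whose entries occur in increasing order. Averaging this naively gives too little, so instead I would use Knuth's stack description through cycles. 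Defant's argument relates the depth to the largest cycle of an associated permutation, and a uniform permutation has expected largest cycle length $\sim\lambda n$, where $\lambda$ is the Golomb--Dickman constant. My plan is therefore to construct an injection or bijection $\pi\mapsto\sigma$ with $\met{\pi}\ge$ (longest cycle of $\sigma$) $-O(1)$, where $\sigma$ is uniform. Linearity of expectation together with the Golomb--Dickman asymptotic then yields $\liminf \mathcal{D}_n/n\ge\lambda$.

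\textbf{Upper bound.} I would bound $\met{\pi}\le\max_a d(a)$, where $d(a)$ is controlled by the left-to-right structure of the entries larger than $a$. I would estimate $\mathbb{E}\max$ by choosing a threshold $a=\theta n$ and treating the two ranges separately. For entries below the threshold, the total work is at most $n-\theta n$ plus a count of the larger entries still unsorted. For the top part, I would bound the expectation through the positions of the $\theta n$ largest entries, which behave like a random permutation. Computing the expectation gives an integral of the form $\int(\dots)\,dx$ involving $\ln$. Optimizing over the split should produce $\tfrac35(7-8\ln 2)$; the $\ln 2$ suggests an integral such as $\int_0^1 x/(1+x)\,dx$ arising from probabilities that relative orders fall into particular patterns.

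\textbf{Main obstacle.} The main obstacle is the lower bound: finding an exact structural quantity that matches the longest-cycle distribution. I would first try to realize the recursion $\pi=L\,n\,R$ as a Foata-type fundamental transformation, with the blocks that are forced to be sorted sequentially becoming cycles.
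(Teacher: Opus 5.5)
Neither inequality is actually proved. Both halves stop where the real work starts. The paper does not prove this statement either; it quotes it from Defant, so your argument has to stand on its own. Note also that the lower half is superseded within the paper: \Cref{cor:lb} gives $\lim_{n}\mathcal{D}_n/n\ge\mathcal{D}'_8/9\approx 0.624909>\lambda$.

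\textbf{Lower bound.} The concrete claims you make are false.
\begin{itemize}
\item In one pass, \emph{every} right-to-left maximum of the prefix before $a$ that exceeds $a$ jumps past $a$, because they all sit below $a$ in the stack. It is not at most one: $s(321)=123$ moves both $3$ and $2$ past $1$.
\item The ``known bound'' via terminal segments also fails. The permutation $\pi=n\,1\,2\cdots(n-1)$ has $s(\pi)=12\cdots n$, so $\met{\pi}=1$, while your bound gives about $n$.
\end{itemize}
What remains is the wish for a bijection $\pi\mapsto\sigma$ with $\met{\pi}\ge$ (longest cycle of $\sigma$). That inequality is the whole content of the lower bound, and you leave it unspecified.

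One way to supply it is to work with $\rho0$, bound $\mathcal{D}'_n$, and return to $\mathcal{D}_n$ via \Cref{prop:defant}.
\begin{itemize}
\item Write $\rho=B_1m_1\cdots B_km_k$, where $m_1>\dots>m_k$ are the right-to-left maxima of $\rho$. Then $s(\rho0)=s(B_1)\cdots s(B_k)\,0\,m_k\cdots m_1$.
\item In each later pass, the entries crossing $0$ are exactly the right-to-left maxima of the current prefix. If that prefix is $s(X_1)\cdots s(X_r)$, each nonempty $s(X_i)$ ends in $\max X_i$. So at most one entry of each $X_i$ crosses, and the new prefix is again of this form for a coarser partition.
\item Hence the entries of each $B_j$ cross $0$ in distinct passes after the first, which gives $\met{\rho0}\ge 1+\max_j|B_j|$.
\item Reverse $\rho$ and apply Foata's fundamental transformation. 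The lengths $|B_j|+1$ are then distributed as the cycle lengths of a uniform permutation, so $\mathcal{D}'_n\ge\mathbb{E}[\text{longest cycle}]\sim\lambda n$.
\end{itemize}
The appended $0$ is essential. For $\pi=2314$ the single block has $1+|B_1|=4$, yet $\met{\pi}=2$.

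\textbf{Upper bound.} Here there is no argument at all.
\begin{itemize}
\item You never name a statistic $f$ with $\met{\pi}\le f(\pi)$ other than the trivial $n-1$.
\item The quantity $d(a)$ is never defined.
\item The claim that the top $\theta n$ entries govern the depth ``like a random permutation'' is unsupported.
\item ``Optimizing over the split should produce $\frac35(7-8\ln 2)$'' is reverse-engineered from the shape of the constant, not derived.
\end{itemize}
What you need is an explicit pointwise upper bound on the depth whose mean you can evaluate asymptotically as $\bigl(\frac35(7-8\ln2)+o(1)\bigr)n$. Up to $O(1)$, this is the same as showing that the passes enlarge the sorted tail of largest entries by a total of at least $\bigl(\frac{24}{5}\ln 2-\frac{16}{5}+o(1)\bigr)n$ on average. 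That total is counted beyond the one entry per pass guaranteed by West's observation. Nothing in the proposal supplies such a bound or such a computation.
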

Our main result is a proof of the convergence of $\mathcal{D}_n/n$ as $n$ approaches infinity.  

\begin{theorem}\label{thm:main}
The limit $\lim_{n\to\infty}\mathcal{D}_n/n$ exists.
\end{theorem}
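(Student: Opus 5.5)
The plan is to trap $\mathcal{D}_n/n$ between two quantities that can be made arbitrarily close, using the stack-sorting diagrams and tableaux mentioned in the abstract. Concretely, I will construct a sequence $(a_k)_{k\ge 1}$ of constants with three properties:
\[
\liminf_{n\to\infty}\frac{\mathcal{D}_n}{n}\ \ge\ a_k \ \text{ for every } k,\qquad a_k\le a_{k+1},\qquad \limsup_{n\to\infty}\frac{\mathcal{D}_n}{n}\ \le\ \lim_{k\to\infty} a_k .
\]
Together these give the limit, and they also produce the monotone approximating sequence promised in the abstract.

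\textbf{Step 1: depth as a maximum of local quantities.} First I will express $\met{\pi}$ through the tableau of $\pi$. Each pass of $s$ sends the largest unsorted entry home, so $\met{\pi}$ should equal a maximum over entries $x$ of a quantity $\tau(x)$. Here $\tau(x)$ counts the passes needed before $x$ stops being obstructed. I expect $\tau(x)$ to be computable recursively from the entries larger than $x$ that lie to its left, in the spirit of how Defant's lower bound arises from cycle-type data. The tableau encodes this recursion as a chain of dependencies, and $\tau(x)$ is the length of that chain.

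\textbf{Step 2: the truncated lower bounds $a_k$.} Next I define a truncated statistic $\tau_k(x)$. It follows the dependency chain only through its first $k$ levels, or equivalently records only the first $k$ rows or columns of the tableau, and it satisfies $\tau_k\le\tau$ pointwise. Model a uniform $\pi\in S_n$ by i.i.d.\ uniform $[0,1]$ labels. The truncated statistic then depends only on a bounded amount of local information of the permutation. A law-of-large-numbers argument should give that $\frac1n\max_x \tau_k(x)$ converges, in expectation, to an explicit constant $a_k$ obtained from a $k$-fold integral. Monotonicity in $k$ of $\tau_k$ gives $a_k\le a_{k+1}$, and $\tau_k\le\tau$ gives $\liminf \mathcal{D}_n/n\ge a_k$. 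As a sanity check, $a_1$ should recover something like Defant's bound $\lambda$.

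\textbf{Step 3 (main obstacle): the matching upper bound.} It remains to show $\limsup \mathcal{D}_n/n\le\sup_k a_k$. For this I must prove that long dependency chains contribute only $o(n)$ in expectation, uniformly in $n$. In other words, I need an error bound of the form
\[
\mathbb{E}\Bigl[\max_x\tau(x)-\max_x\tau_k(x)\Bigr]\le \varepsilon_k\, n+o(n), \qquad \varepsilon_k\to 0 .
\]
My first attempt would bound the probability that a chain survives past level $k$ while still gaining a linear number of passes. Each extra level should require a specific increasing, or nested-stack, configuration among the larger entries, and I hope its probability decays geometrically in $k$, for instance like the tail of a cycle-length distribution. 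A union bound over the $n$ entries, combined with the trivial bound $\tau\le n-1$, should then give $\varepsilon_k\to0$.

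Controlling this interaction uniformly in $n$ is where I expect the real difficulty. A single long chain can in principle cost $\Theta(n)$ passes, so I would need a concentration or second-moment estimate showing that such chains are rare enough.
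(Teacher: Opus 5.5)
\textbf{The gap is Step~3, which is the whole content of the theorem.} Once Steps~1--2 are granted, existence of the limit is equivalent to $\limsup_n\mathcal{D}_n/n\le\sup_k a_k$, and you offer only a hope for this.

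The sketched argument fails as written. A per-entry survival probability $p_k$ that decays in $k$ but not in $n$, union-bounded over $n$ entries and multiplied by the trivial bound $\tau\le n-1$, gives an error of order $n^2p_k$, not $\varepsilon_k n$.

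Worse, the premise that long chains are rare is false. In the tableau, the chain through $x$ has length $\col_\pi(x)$. The longest chain is the longest row of $T_\pi$, of length $w(\alpha_\pi)=\met{\pi}$, which is linear in $n$ on average. This makes Step~2 either trivial or unjustified:
\begin{itemize}
\item If $\tau_k$ follows chains through only $k$ levels (equivalently, the first $k$ columns), then $\tau_k\le k$, so $a_k=0$ for every $k$. Step~3 would then assert $\mathcal{D}_n/n\to0$, contradicting $\liminf_n\mathcal{D}_n/n\ge\lambda>0$.
\item If instead you keep the first $k$ rows, the statistic is not local, since one row can carry $\Theta(n)$ cells. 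You supply no law of large numbers for its maximum.
\end{itemize}
Also, the tableau framework lives on $S_n'$, so you would need Defant's Proposition~2.9 to return to $\mathcal{D}_n$.

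\textbf{The missing idea is that no matching upper bound is needed, because a bounded monotone sequence converges.} The paper gets monotonicity from three facts:
\begin{itemize}
\item the hook-length count $N(\alpha)$ of permutations in $S_n'$ with diagram $D(\alpha)$;
\item the corner recursion $N(\alpha)=\sum_{c\in\inn(D(\alpha))}N(\alpha^{-c})\,h_\alpha(c)$, inherited from the linear-extension recursion;
\item the outer-corner identities: $\sum_{c\in\out(D(\beta))}h_{\beta^{+c}}(c)=n$, and the same sum restricted to width-increasing corners equals $t(\beta)\ge w(\beta)$.
\end{itemize}
Reindexing $\mathcal{D}'_n=\frac{1}{n!}\sum_{\alpha\models n}N(\alpha)w(\alpha)$ by $\beta=\alpha^{-c}$ then gives the exact identity
\[
\mathcal{D}'_n=\mathcal{D}'_{n-1}+\frac{1}{n!}\sum_{\beta\models n-1}N(\beta)\,t(\beta)\ \ge\ \frac{n+1}{n}\,\mathcal{D}'_{n-1}.
\]
Hence $\mathcal{D}'_{n-1}/n$ is nondecreasing and bounded by $1$, so it converges. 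Defant's proposition transfers the limit to $\mathcal{D}_n/n$. This controls $\liminf$ and $\limsup$ at once, with no probabilistic estimates.
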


Let $S_n'$ be the set $\{\rho0:\rho\in S_n\}$, where $\rho0$ denotes the concatenation of $\rho$ and $0$, and define the quantity
\[\mathcal{D}'_n := \frac{1}{n!}\sum_{\rho\in S_n}\met{\rho0}.\]
Defant\cite{defant2} showed that the difference between $\mathcal{D}'_n/n$ and $\mathcal{D}_n/n$ goes to $0$ as $n$ approaches infinity. 
\begin{proposition}[{Defant \cite[Proposition~2.9]{defant2}}]\label{prop:defant}
We have
\[\lim_{n\to \infty}\left(\frac{\mathcal{D}'_n}{n}-\frac{\mathcal{D}_n}{n}\right) = 0.\]
\end{proposition}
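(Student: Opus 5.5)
The plan is to sandwich $\met{\rho0}$ between $\met{\rho}$ and $\met{\rho}$ plus an error term whose average is $o(n)$. Since $\rho0$, with $\rho$ uniform in $S_n$, has the same relative order as a uniform permutation of $S_n$ with a new minimum appended, it is enough to show
\[\met{\rho}\le \met{\rho0}\le \met{\rho}+E(\rho),\qquad \frac{1}{n!}\sum_{\rho\in S_n}E(\rho)=o(n).\]
Dividing by $n$ then gives the proposition.

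\textbf{Step 1: the lower bound.} I would first check that deleting the smallest entry commutes with $s$. When $0$ is read, every element on the stack is larger than $0$, so $0$ is pushed. It is then popped the moment the next entry arrives, or at the end of the input. It therefore never blocks or delays any other entry. By induction, $s^k(\rho0)$ with $0$ deleted equals $s^k(\rho)$ for every $k$, so $\met{\rho}\le\met{\rho0}$.

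\textbf{Step 2: tracking the entry $0$.} Next I would describe how $0$ moves under one pass. The entries still on the stack when $0$ arrives are exactly the right-to-left maxima of the prefix before $0$. These are popped after $0$, while every other prefix entry is output before $0$. So one application of $s$ moves $0$ to the left past exactly the right-to-left maxima of its current prefix, leaving the rest of the word in the order given by Step 1.

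After $k=\met{\rho}$ passes the nonzero entries are sorted, so
\[s^{k}(\rho0)=1\,2\cdots a\;0\;(a+1)\cdots n\]
for some $a=a(\rho)$. In an increasing prefix only the last entry is a right-to-left maximum, so each further pass moves $0$ left by exactly one position. Hence $\met{\rho0}=\met{\rho}+a(\rho)$, and the error term is $E(\rho)=a(\rho)$.

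\textbf{Step 3: the main obstacle.} The difficulty is showing $\frac{1}{n!}\sum_\rho a(\rho)=o(n)$, that is, showing that $0$ has almost reached the front by the time $\rho$ is sorted. By Step 2, after pass $j$ the entries ahead of $0$ are those prefix entries that were never right-to-left maxima of the prefix ahead of $0$ at any earlier pass. My first approach would be to show that each pass moves $0$ past all entries above a threshold that increases from pass to pass. The reason is that after $j$ passes the $j$ largest entries are in their final positions, so they lie to the right of $0$. One would then strengthen this: on average, the entries still ahead of $0$ after $\met{\rho}$ passes are all smaller than some $o(n)$ bound, using Defant's bounds on $\mathcal{D}_n$ to ensure $\met{\rho}$ is typically linear in $n$.

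If this threshold approach fails, the fallback is to show that $a(\rho)\ge \varepsilon n$ happens with probability tending to $0$, for instance by a concentration argument on the positions of the large entries. Combined with the trivial bound $a(\rho)\le n$, this would already give $\frac{1}{n!}\sum_\rho a(\rho)=o(n)$.

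I expect this estimate on $a(\rho)$, rather than the comparison in Steps 1 and 2, to carry the real content of the proposition.
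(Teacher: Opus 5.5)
The paper gives no proof of this proposition. It quotes it from Defant and uses it as a black box, so your argument has to stand on its own.

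\textbf{What is correct.} Your Steps 1 and 2 are right. Deleting $0$ commutes with $s$, and each pass moves $0$ left past exactly the right-to-left maxima of its current prefix. Hence $\met{\rho0}=\met{\rho}+a(\rho)$ exactly, which gives
\[\mathcal{D}'_n-\mathcal{D}_n=\frac{1}{n!}\sum_{\rho\in S_n}a(\rho).\]
This is an exact reformulation of the proposition, not progress toward it. Together with $a(\rho)\ge 0$ it proves only the easy inequality $\mathcal{D}_n\le\mathcal{D}'_n$. The whole content of the statement is the estimate $\frac{1}{n!}\sum_\rho a(\rho)=o(n)$, and Step 3 does not prove it.

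\textbf{Why Step 3 does not work.}
\begin{itemize}
\item Your threshold observation (after $j$ passes the $j$ largest entries lie to the right of $0$) gives only $a(\rho)\le n-\met{\rho}$. This is the same as $\met{\rho0}\le n$, the trivial bound. Since $\limsup_n\mathcal{D}_n/n\le\frac35(7-8\ln 2)<1$, the quantity $n-\met{\rho}$ is of order $n$ on average, so this gives nothing.
\item No deterministic bound of $a(\rho)$ in terms of $\met{\rho}$ can work. For $\rho=(n-1)(n-2)\cdots 2\,1\,n$ one has $\met{\rho}=1$, while $s(\rho0)=1\,2\cdots(n-1)\,0\,n$, so $a(\rho)=n-1$.
\item Your proposed strengthening (the entries left of $0$ at time $\met{\rho}$ are on average below an $o(n)$ threshold) is a restatement of the goal.
\item Your fallback, $\Pr[a(\rho)\ge\varepsilon n]\to0$ for every $\varepsilon>0$, is also a restatement: since $0\le a(\rho)\le n$, it is equivalent to $\frac{1}{n!}\sum_\rho a(\rho)=o(n)$. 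You offer no mechanism for either.
\item Using Defant's bounds on $\mathcal{D}_n$ to make $\met{\rho}$ linear says nothing about $a(\rho)$.
\end{itemize}

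\textbf{What is missing.} You need a genuinely probabilistic input about the typical structure of $s^j(\rho)$ for random $\rho$. It must show that, for most $\rho$, the entry $0$ has nearly reached the front by the time the remaining entries are sorted. Until that is supplied, the proposal establishes only $\mathcal{D}_n\le\mathcal{D}'_n$.
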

Thus, a proof of the convergence of $\lim_{n\to\infty}\mathcal{D}'_n/n$ is sufficient for \Cref{thm:main}. As a natural consequence of the proof of \Cref{thm:main}, we also obtain a means to compute lower bounds arbitrarily close to $\lim_{n\to\infty}\mathcal{D}_{n}/n$.
\begin{corollary}\label{cor:lb}
$(\mathcal{D}_{n-1}'/n)_{n=2}^{\infty}$ is a monotonically increasing sequence, which converges to $\lim_{n\to\infty}\mathcal{D}_n/n$.
\end{corollary}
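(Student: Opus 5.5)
The plan is to prove a superadditivity-type inequality for the quantities $a_n := \mathcal{D}'_{n-1}$, then combine it with \Cref{prop:defant} and the a priori bounds. The goal is to show that $a_n/n$ is nondecreasing in $n$, i.e.
\[
\frac{\mathcal{D}'_{n-1}}{n}\le \frac{\mathcal{D}'_{n}}{n+1}\qquad (n\ge 2).
\]
The sequence is bounded above, since $\met{\pi}\le n-1$ gives $\mathcal{D}'_{n-1}/n\le 1$. It therefore converges by monotonicity. We have $\mathcal{D}'_{n-1}/n=\frac{n-1}{n}\cdot\frac{\mathcal{D}'_{n-1}}{n-1}$, and \Cref{prop:defant} says $\mathcal{D}'_{m}/m-\mathcal{D}_m/m\to 0$. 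Hence the limit of $\mathcal{D}'_{n-1}/n$ equals $\lim \mathcal{D}_n/n$, which in particular exists by \Cref{thm:main}, or is simultaneously established by this argument.

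For the monotonicity, I would interpret $\rho0$ with $\rho\in S_{n-1}$ as a permutation of $n$ symbols whose last entry is the minimum. The factor $n$ then corresponds to the $n$ symbols, so $\mathcal{D}'_{n-1}/n$ is like an average ``depth per symbol.'' The plan is a probabilistic coupling. Take a uniformly random $\sigma\in S_n$ ending in $0$ (one extra symbol), and remove one symbol uniformly at random among the $n+1$ symbols, always keeping a terminal $0$ by re-standardizing appropriately. Then show
\[
\mathbb{E}\,\met{\text{smaller}}\le \frac{n}{n+1}\,\mathbb{E}\,\met{\sigma}.
\]
Equivalently: summed over all $n+1$ deletions, the total depth of the deleted-symbol permutations is at most $n\met{\sigma}$. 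This is where the stack-sorting diagrams and tableaux framework enters. Using the decomposition of the depth $\met{\sigma}$ as determined by the diagram/tableau structure, I would identify, for each $\sigma$, a set of at least $\met{\sigma}$ symbols whose deletion lowers the depth by at least one; deletion of any other symbol does not increase it. Summing then gives $\sum_{\text{deletions}}\met{\cdot}\le (n+1)\met{\sigma}-\met{\sigma}$, as required.

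Two facts are needed. First, deleting a symbol never increases depth; I expect this to follow from the fact that $s$ commutes with pattern restriction in a suitable sense, so that the entries still out of order after $k$ passes only shrink. Second, there are at least $\met{\sigma}$ ``critical'' symbols. For the latter, note that each pass fixes the largest unsorted entry. The natural guess is that, for each pass $k\le \met{\sigma}$, there is a distinct entry whose removal reduces the depth, e.g.\ the entry that pass $k$ places in its final position via the relevant chain in the stack-sorting diagram.

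The main obstacle is the second fact. Producing $\met{\sigma}$ distinct critical symbols requires a precise description of how depth is witnessed, and this is exactly what the diagram/tableau framework should provide through a chain of length $\met{\sigma}$ each of whose elements is necessary. I would first try to extract such a chain by tracking, pass by pass, the entry that blocks sortedness, and then check that deleting it shortens the chain by one.
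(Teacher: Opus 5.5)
\paragraph{Verdict: genuine gap.} Your bound $\mathcal{D}'_{n-1}/n\le 1$ and your deduction of the limit from monotonicity and \Cref{prop:defant} are fine and match the paper. The gap is the monotonicity step. Both pointwise facts your deletion coupling rests on are false.

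\paragraph{Deletion can increase depth.} Take $\sigma=3\,5\,2\,4\,1\,0$. One pass gives $3\,2\,0\,1\,4\,5$ and a second pass sorts it, so $\met{\sigma}=2$. Deleting $5$ gives $3\,2\,4\,1\,0$. Its passes are $2\,3\,0\,1\,4$, then $2\,0\,1\,3\,4$, then sorted, so the depth rises to $3$. This is West's classical observation that $2$-stack-sortable permutations are not closed under pattern containment (the barred pattern $3\bar{5}241$); $s$ does not commute with restriction to a subpattern.

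\paragraph{Critical symbols need not exist.} Take $\sigma=n\,(n-1)\cdots 1\,0$ with $n\ge 2$. Then $\met{\sigma}=1$. Each of the $n+1$ deletions again has depth $1$, since no permutation of at least two symbols ending in its minimum is sorted. So the sum over deletions is $n+1>n\met{\sigma}$, and no deletion is critical.

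\paragraph{The coupling has the wrong marginal.} Deleting the terminal $0$ from a uniform $\rho0\in S_n'$ leaves $\rho$, which is uniform on $S_n$ and does not end in its minimum. No re-standardization fixes this. Your expectation is therefore $\frac{n}{n+1}\mathcal{D}'_{n-1}+\frac{1}{n+1}\mathcal{D}_n$, not $\mathcal{D}'_{n-1}$.

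Suppose instead you delete only the $n$ nonzero symbols, which does give the uniform distribution on $S'_{n-1}$. Then the same pointwise scheme would prove $\mathcal{D}'_{n-1}/(n-1)\le \mathcal{D}'_n/n$. This is false already at $n=2$, since $\mathcal{D}'_1=1$ and $\mathcal{D}'_2=3/2$. So finding better witnesses will not help: a pointwise bound on the larger permutation is the wrong shape of argument.

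\paragraph{What the paper does instead.} The missing ingredient is a growth step on stack-sorting diagrams rather than deletion of arbitrary symbols.
\begin{itemize}
\item The linear-extension recursion together with the hook factorization gives $N(\alpha)=\sum_{c\in\inn(D(\alpha))}N(\alpha^{-c})\,h_\alpha(c)$.
\item For $\beta\models n-1$, the hook lengths $h_{\beta^{+c}}(c)$ over the outer corners $c$ sum to $n$.
\item Adding a corner changes the width by exactly $0$ or $1$, and the corners that increase it have total hook length $t(\beta)$.
\end{itemize}
These give $\mathcal{D}'_n=\mathcal{D}'_{n-1}+\frac{1}{n!}\sum_{\beta\models n-1}N(\beta)\,t(\beta)$. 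The pointwise inequality $t(\beta)\ge w(\beta)$, which holds on the smaller object $\beta$, then yields $\mathcal{D}'_n\ge \frac{n+1}{n}\mathcal{D}'_{n-1}$.
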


The current author has computed $\mathcal{D}'_{n-1}/n$ up to $n=38$, which yields the bound
\[0.724182\le \lim_{n\to\infty}\frac{\mathcal{D}_n}{n}\le \frac{3}{5}(7-8\ln 2).\]

\section{Preliminaries}\label{sec:prelim}

We follow the notation and conventions of \cite{zhang26} with some minor discrepancies.

\subsection*{Basic notation}

A \emph{composition} of $n$, written $\alpha\models n$, is a tuple $\alpha=(\alpha_1,\ldots,\alpha_k)$ of positive integers summing to $n$. The \emph{length} and \emph{width} of $\alpha$ are denoted by $\ell(\alpha):=k$ and $w(\alpha):=\max_{i\in[k]}\alpha_i$ respectively. The \emph{composition diagram} of $\alpha$ is the set \[D(\alpha):=\{(i,j)\in\mathbb{N}^2:i\le\alpha_j\}\] along with the partial order relation $\ge_{D}$ satisfying \[(i,j)\ge_D(i',j') \iff i\le i',\ j\le j'.\] A \emph{linear extension} of $(D(\alpha),\ge_D)$ is a bijection $f:[n]\to D(\alpha)$ such that $f(x)\ge_D f(y)$ implies $x\ge y$ for all $x,y\in [n]$. The set of all linear extensions of $D(\alpha)$ is denoted by $\mathcal{L}(D(\alpha))$.
\subsection*{Stack-sorting diagrams and tableaux}

For every permutation $\pi=\rho0\in S_n'$, we assign two coordinates $\col_{\pi}(i)$ and $\row_{\pi}(i)$ to each entry $i\in [n]$. The column $\col_{\pi}(i)$ is the unique integer for which $i$ is sent to the right of $0$ when $s$ is applied to $s^{\col_{\pi}(i)-1}(\pi)$. If $\col_{\pi}(i)=1$, then the row $\row_{\pi}(i)$ denotes the position of $i$ amongst the right-to-left maxima of $\rho$. Otherwise, $\row_{\pi}(i)$ is defined recursively by the equation
\[\row_{\pi}(i) = \row_{\pi}(\leftof_{\pi}(i)),\]
where $\leftof_{\pi}(i)$ is the first element of $s^{\col_{\pi}(i)-2}(\pi)$ to the right of $i$ such that \[\col_{\pi}(\leftof_{\pi}(i))=\col_{\pi}(i)-1.\]
The existence of $\col_{\pi}(i)$, $\row_{\pi}(i)$, and $\leftof_{\pi}(i)$ are proven in \cite{zhang26}. 
\begin{remark}
The general motivation for this framework is the observation that each element $i\in[n]$ is sent to the right of $0$ with iterative applications of $s$ on $\pi$, and the permutation is sorted precisely when every element has been sent to the right of $0$. The column $\col_{\pi}(i)$ tracks the iteration in which $i$ is sent to the right of $0$, while $\row_{\pi}(i)$ is responsible for tracking the relative position of $i$ with respect to elements of the current and previous column.
\end{remark}
We now introduce the stack-sorting diagram and the stack-sorting tableau of $\pi\in S_n'$. Let the composition $\alpha_\pi\models n$ be defined such that the $j$th part of $\alpha_{\pi}$ is
\[(\alpha_\pi)_j:=|\{i\in[n]:\row_\pi(i)=j\}|.\]
We refer to $D(\alpha_{\pi})$ as the \emph{stack-sorting diagram} of $\pi$, and the \emph{stack-sorting tableau} of $\pi$ is the map $T_{\pi}:[n]\to D(\alpha_{\pi})$ satisfying
\[T_{\pi}(i) := (\col_{\pi}(i), \row_{\pi}(i))\]
for all $i\in [n]$.
\begin{example}\label{ex:main}
Consider the permutation \[\pi = 9\ 3\ 10\ 7\ 8\ 2\ 6\ 1\ 4\ 5\ 0\in S_{10}'.\] We visualize $T_\pi$ in English notation below. Note that $w(\alpha_{\pi})=\met{\pi}=4$, and $\leftof_{\pi}(T_{\pi}^{-1}(i,j))$ corresponds to the entry $T_{\pi}^{-1}(i-1,j)$ for all $(i,j)\in D(\alpha_{\pi})$ such that $i>1$.
\begin{center}
\begin{minipage}[t]{0.45\textwidth}
\centering
\[
\begin{array}{c}
\c2{9}\ \c3{3}\ \c1{10}\ \c2{7}\ \c1{8}\ \c3{2}\ \c1{6}\ \c4{1}\ \c2{4}\ \c1{5}\ 0 \\
\downarrow{s} \\
\c3{3}\ \c2{9}\ \c2{7}\ \c3{2}\ \c4{1}\ \c2{4}\ 0\ \c1{5}\ \c1{6}\ \c1{8}\ \c1{10} \\
\downarrow{s} \\
\c3{3}\ \c4{1}\ \c3{2}\ 0\ \c2{4}\ \c1{5}\ \c1{6}\ \c2{7}\ \c1{8}\ \c2{9}\ \c1{10} \\
\downarrow{s} \\
\c4{1}\ 0\ \c3{2}\ \c3{3}\ \c2{4}\ \c1{5}\ \c1{6}\ \c2{7}\ \c1{8}\ \c2{9}\ \c1{10} \\
\downarrow{s} \\
0\ \c4{1}\ \c3{2}\ \c3{3}\ \c2{4}\ \c1{5}\ \c1{6}\ \c2{7}\ \c1{8}\ \c2{9}\ \c1{10}
\end{array}
\]
\end{minipage}
\begin{minipage}[t]{0.45\textwidth}
\centering
\[
\ytableaushort{{\c1{10}}{\c2{9}}{\c3{3}},{\c1{8}}{\c2{7}},{\c1{6}},{\c1{5}}{\c2{4}}{\c3{2}}{\c4{1}}}
\]
\end{minipage}
\end{center}
\end{example}
In general, the equality $w(\alpha_{\pi})=\met{\pi}$ holds because the number of columns in $D(\alpha_{\pi})$ equals the number of iterations of $s$ required to sort $\pi$. 
\begin{theorem}[{Zhang \cite[Theorem~3.16]{zhang26}}]\label{thm:Tpi_bijection}
For all $\pi\in S_n'$, the map $T_\pi$ is a linear extension of $(D(\alpha_\pi),\ge_D)$.
\end{theorem}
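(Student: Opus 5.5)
The plan is to prove the two defining properties separately. First, $T_\pi$ is a bijection onto $D(\alpha_\pi)$. Second, $T_\pi$ reverses the order: $T_\pi(x)\ge_D T_\pi(y)\Rightarrow x\ge y$. Since $\ge_D$ is generated by the covering relations $(i,j)\ge_D(i+1,j)$ and $(i,j)\ge_D(i,j+1)$, the second property reduces by transitivity to two local statements. Along a row: $\leftof_\pi(i)>i$ whenever $\col_\pi(i)>1$. Down a column: entries in the same column decrease as the row index increases.

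The basic tool is a description of one pass of $s$ relative to $0$. When $0$ is read, it pops the entire stack. The elements still on the stack at that moment are exactly the right-to-left maxima of the prefix of $s^{c-1}(\pi)$ lying left of $0$, and they are output in increasing order, just before $0$. Everything after $0$ is already sorted and larger. So, reading the statement as saying that $i$ lies to the right of $0$ in $s^{\col_\pi(i)}(\pi)$, column $c$ consists of the right-to-left maxima of the part of $s^{c-1}(\pi)$ left of $0$. This is consistent with the example, where $9$ lands right of $0$ after the second pass.

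\textbf{Column~1.} Row $1$ is the largest such maximum, because right-to-left maxima read left to right are decreasing. Hence values decrease with the row index in column $1$.

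\textbf{Induction on columns.} Assume the column statement for column $c-1$, and analyse how the elements of column $c$ sit in $s^{c-2}(\pi)$ relative to those of column $c-1$. Let $i$ be in column $c$. It is not a right-to-left maximum in $s^{c-2}(\pi)$, so some larger element lies to its right before $0$. I would show that the first column-$(c-1)$ element to its right, namely $\leftof_\pi(i)$, is greater than $i$. The reason: any element between $i$ and $\leftof_\pi(i)$ has column $\ge c$, and a stack argument shows the element that pops $i$ must be a column-$(c-1)$ maximum. This gives $\leftof_\pi(i)>i$.

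\textbf{Bijection.} Next I would show $\leftof_\pi$ is injective from column $c$ into column $c-1$. Suppose $i<i'$ in column $c$ had the same $\leftof_\pi$ value $m$. Both lie in the block immediately left of $m$, which contains no column-$(c-1)$ element. In the next pass, $s$ outputs the elements of such a block (those not popped earlier) in increasing order before $m$. Among them only the largest can become a right-to-left maximum of the next prefix, contradicting that both are in column $c$. Injectivity makes each row $j$ a chain $\leftof$-linked through consecutive columns $1,\dots,\alpha_j$, with no repeats. So $T_\pi$ is a bijection onto $D(\alpha_\pi)$.

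\textbf{Main obstacle.} I expect the hardest step to be the column monotonicity for $c>1$. For two entries $x,y$ of column $c$ with $\row_\pi(x)<\row_\pi(y)$, we need $x>y$. I would derive this from the order of their $\leftof$ images, which satisfy $\leftof_\pi(x)>\leftof_\pi(y)$ by induction. This needs the relative left-to-right order of column-$(c-1)$ elements and their attached blocks to be preserved by $s$. That in turn requires a careful invariant, proved simultaneously in the induction, describing the positions in $s^{c-1}(\pi)$ of the not-yet-sorted elements relative to the next column's maxima.
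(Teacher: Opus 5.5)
Your plan is sound: the bijection and row parts essentially go through. But the proposal stops at the step that carries the theorem. Column monotonicity for $c>1$ is reduced to an unproved claim that $s$ preserves the left-to-right order of the blocks attached to column-$(c-1)$ entries, and you defer that claim to an unspecified invariant across passes. As written, this is a genuine gap.

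The missing fact is only a one-pass property of $s$. It comes from the same barrier mechanism you already use for injectivity.
\begin{itemize}
\item Write $s^{c-2}(\pi)=\sigma0\tau$. Let $m_1>\dots>m_k$ be the right-to-left maxima of $\sigma$ in left-to-right order; these are the column-$(c-1)$ entries.
\item Let $B_t$ be the possibly empty segment of $\sigma$ strictly between $m_{t-1}$ and $m_t$, with $B_1$ the segment before $m_1$.
\item Every entry of $B_t$ is smaller than $m_{t-1}$, because $m_{t-1}$ is a right-to-left maximum lying to its left.
\item Every entry of $B_t$ is smaller than $m_t$, because $m_t$ is the largest entry to its right and the entry is not a right-to-left maximum.
\item Hence $m_{t-1}$ stays on the stack until after $0$ is read, and reading $m_t$ pops every entry of $B_t$ still on the stack.
\item So $B_t$ is output as a contiguous word equal to $s(B_t)$, ending with $\max B_t$. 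The part of $s^{c-1}(\pi)$ left of $0$ is exactly $s(B_1)s(B_2)\cdots s(B_k)$.
\end{itemize}
Only the blocks keep their order; the column-$(c-1)$ entries themselves jump past $0$.

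This gives injectivity of $\leftof_\pi$ immediately, and it closes your main step. Take $x,y$ in column $c$ with $\row_\pi(x)<\row_\pi(y)$. Induction gives $\leftof_\pi(x)>\leftof_\pi(y)$, so $\leftof_\pi(x)$ lies left of $\leftof_\pi(y)$ in $\sigma$. Then the block of $x$ precedes that of $y$, so $x$ precedes $y$ in $s(B_1)\cdots s(B_k)$. Both are right-to-left maxima of that word, so $x>y$. No further invariant is needed.

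Smaller corrections:
\begin{itemize}
\item $0$ does not pop the stack. Every entry exceeds $0$, so $0$ is pushed and then popped first. The right-to-left maxima of $\sigma$ are output after $0$, merged with $\tau$ into increasing order. Your phrase ``output in increasing order, just before $0$'' contradicts the (correct) conclusion you draw from it.
\item A block's output is $s(B_t)$, which need not be increasing; for $B_t=2\,3\,1$ it is $2\,1\,3$. What matters is that it ends with $\max B_t$. That also covers the entries ``popped earlier'', which your injectivity argument leaves out.
\item For $\leftof_\pi(i)>i$, skip the vague ``element that pops $i$'' argument. Directly, $\leftof_\pi(i)$ is the largest entry of $\sigma$ to the right of $i$, and it exceeds $i$ because $i$ is not a right-to-left maximum.
\item In composition diagrams a column can skip rows, so $\ge_D$ is not generated by adjacencies $(i,j)\ge_D(i,j+1)$. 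The correct reduction is to same-row comparisons and same-column comparisons with any larger row. Your column claim is already stated in that form, so this is only a wording fix.
\end{itemize}
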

\subsection*{Hook length formula for stack-sorting}
For every composition $\alpha\models n$ and cell $(i,j)\in D(\alpha)$, let $u_{\alpha}(i,j)$ denote \[\max(\{j'\in [j-1]: \alpha_{j'}\ge i-1\}\cup \{0\}),\] and define the \emph{hook length} function $h_\alpha: \mathbb{N}^2\to \mathbb{N}$ such that
\[h_\alpha(i,j) := \bigl|\{(i',j')\in D(\alpha):i'\le i-1, u_{\alpha}(i,j)<j'\le j\}\bigr|\]
if $i>1$ and $h_{\alpha}(i,j):= 1$ if $i=1$ for all $(i,j)\in\mathbb{N}^2$. In words, if $i>1$, then $h_{\alpha}(i,j)$ is the number of cells between columns $1$ and $i-1$ and rows $u_{\alpha}(i,j) + 1$ and $j$.
\begin{example}
Continuing with \Cref{ex:main}, each cell $(i,j)\in D(\alpha)$ below is labeled with its hook length $h_\alpha(i,j)$ for the composition $\alpha=(3,2,1,4)$.
\[\ytableaushort{{1}{1}{2},{1}{1},{1},{1}{1}{3}{6}}\]
\end{example}
\begin{theorem}[Zhang {\cite[Corollary~4.11]{zhang26}}]\label{thm:hook_count}
For all $\alpha\models n$, the number of permutations $\pi\in S_n'$ satisfying $\alpha_{\pi}=\alpha$ is
\[N(\alpha):=|\mathcal{L}(D(\alpha))|\cdot\prod_{(i,j)\in D(\alpha)}h_\alpha(i,j).\]
\end{theorem}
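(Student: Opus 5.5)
The plan is to prove the formula by induction on $n$, removing the smallest entry. By \Cref{thm:Tpi_bijection}, the map $\pi\mapsto T_\pi$ sends the set $\{\pi\in S_n':\alpha_\pi=\alpha\}$ into $\mathcal{L}(D(\alpha))$. It therefore suffices to show that for each $T\in\mathcal{L}(D(\alpha))$ the fiber $\{\pi:T_\pi=T\}$ has exactly $\prod_{(i,j)\in D(\alpha)}h_\alpha(i,j)$ elements. This number depends only on $\alpha$, and summing over $\mathcal{L}(D(\alpha))$ then gives $N(\alpha)$. For $n=1$ the only permutation is $10$, with one cell of hook length $1$, so the base case holds.

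For the inductive step, fix $T$ and let $c=(i,j)=T(1)$. Since $T$ is a linear extension, the cell $c$ is $\ge_D$-minimal. Concretely, $c$ is the last cell of row $j$, and no later row has length $\ge i$. Given $\pi=\rho0$ with $T_\pi=T$, delete the entry $1$ and subtract $1$ from every remaining nonzero entry, obtaining $\pi^-\in S_{n-1}'$. The first lemma I would prove is that $T_{\pi^-}$ equals $T$ with the cell $c$ removed and its labels decreased by one, and that its diagram is $D(\alpha-e_j)$. This requires checking that deleting the globally smallest entry does not change when any other entry crosses $0$, nor the chain of $\leftof$ pointers among the other entries. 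The reason is that $1$ never blocks a larger entry from being popped in West's map except by riding the stack just above $0$, and it becomes a right-to-left maximum only at the very end. I would verify this directly from the stack description, one pass at a time, using the recursive definition of $\row_\pi$.

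The main step, and the expected obstacle, is the converse count. Given $\pi^-$ with tableau $T^-$, I would show that the number of positions at which $1$ can be inserted into $\rho^-$ so that the resulting $\pi$ has $T_\pi=T$ equals $h_\alpha(i,j)$. If $i=1$, the entry $1$ must be a right-to-left maximum of $\rho$ in row $j$. Since $1$ is the smallest entry, it can only be the final entry of $\rho$, and it must be the $j$th right-to-left maximum. This gives exactly one position, matching $h_\alpha(1,j)=1$.

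If $i>1$, then $1$ must cross $0$ exactly at pass $i$, and $\leftof_\pi(1)$ must be $T^{-1}(i-1,j)$. I would track the location of $1$ through the passes. Before it crosses, the entry $1$ sits in the block immediately left of $0$ and moves only relative to entries that have not yet crossed. Its final relative placement at pass $i-1$ is determined by which entries of columns $1,\dots,i-1$ lie between it and $0$. I would show that the admissible initial positions correspond bijectively to cells $(i',j')$ with $i'\le i-1$ and $u_\alpha(i,j)<j'\le j$. The lower cutoff $u_\alpha(i,j)$ is the last earlier row long enough to reach column $i-1$; past that row, inserting $1$ further left would make its $\leftof$ land in that row instead of row $j$. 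The hardest part will be proving this bijection cleanly, most likely through an invariant describing the block structure of $s^{k}(\pi)$ in terms of the tableau rows. Once it is established, induction gives fiber size $h_\alpha(i,j)\cdot\prod_{D(\alpha-e_j)}h_{\alpha-e_j}$. This product equals $\prod_{D(\alpha)}h_\alpha$ provided that removing the minimal cell $c$ does not change any other hook length. That holds because $c$ is in row $j$, column $i$, and every other cell whose hook region could contain $c$ lies in a later row of length $\ge i+1$, which is excluded by the minimality of $c$.
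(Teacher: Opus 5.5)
\textbf{The gap is the insertion count.} The architecture is sound. Uniform fibers over $\mathcal{L}(D(\alpha))$ give the count. The deletion step is correct: removing $1$ commutes with every pass of $s$, and for $x\neq 1$ the entry $\leftof_\pi(x)$ is the largest entry following $x$ in the unsorted block of $s^{\col_\pi(x)-2}(\pi)$, which exceeds $x$ and so is unaffected. The hook product factors as in \Cref{lem:factorization}; you should also check that $u_\alpha$ is unchanged, which the same minimality of $c$ gives.

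The paper gives no proof of this statement to compare with; it imports it from \cite{zhang26}. Its \Cref{lem:corner} and \Cref{lem:hooksum} are facts about the formula, not about permutations, so they cannot be used to derive it.

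The step you flag is the missing one: for every $\pi^-$ with $T_{\pi^-}=T^-$, exactly $h_\alpha(i,j)$ insertion slots for $1$ produce $T$. That claim is the entire content of the theorem, and you only motivate the left cutoff. Write $u:=u_\alpha(i,j)$. Nothing in the proposal shows any of the following:
\begin{itemize}
\item that slots right of the cutoff give column exactly $i$, not a smaller one;
\item that they give row exactly $j$, not a later row of length $i-1$;
\item where the right cutoff lies;
\item why the number of good slots equals the number of cells in rows $u+1,\dots,j$.
\end{itemize}
The last point is not a formality, because an entry's row is not determined by its position among the right-to-left maxima of $\rho$. In \Cref{ex:main}, the entry $2$ sits between the right-to-left maxima $8$ and $6$ (rows $2$ and $3$), yet $T_\pi(2)=(3,4)$.

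\textbf{What would close it.} Roughly three lemmas are needed.

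First, make the motion of $1$ exact. For a word $w$, let $\Sigma(w)$ be the entries popped while $w$ is read, i.e.\ $s(w)$ minus its final run of right-to-left maxima. In each pass, the entries landing left of $1$ are exactly those to its left that are not right-to-left maxima of the prefix before $1$. So, until $1$ crosses $0$, the part of $s^k(\pi)$ left of $1$ is $\Sigma^k(\gamma)$, where $\gamma$ is the prefix of $\rho^-$ preceding $1$. This is a prefix of $\Sigma^k(\rho^-)$. Consequently:
\begin{itemize}
\item $\col_\pi(1)=1+\min\{k:\Sigma^k(\gamma)=\Sigma^k(\rho^-)\}$;
\item $\leftof_\pi(1)$ is the largest entry of the complementary suffix of $\Sigma^{\col_\pi(1)-2}(\rho^-)$;
\item $\col_\pi(1)$ weakly decreases as the slot moves right.
\end{itemize}

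Second, prove a block lemma. If $\alpha^{-c}_r\ge\alpha^{-c}_{r'}$ for all $r'>r$, then the entries in rows $\le r$ are exactly the entries of $\rho^-$ weakly left of its $r$th right-to-left maximum $R_r$. Moreover, sorting that prefix alone gives them the same columns. This fails without the hypothesis, which is why the example above is possible.

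Third, show that the entries of column $m$, read left to right in $\Sigma^{m-1}(\rho^-)$, have increasing rows.

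The rows $u$ and $j$ are consecutive rows of $\alpha^{-c}$ satisfying the block-lemma hypothesis; they are the $j_{t-1},j_t$ of \Cref{lem:hooksum}. With these lemmas, the good slots are exactly those immediately before an entry lying strictly right of $R_u$ and weakly left of $R_j$, with $R_0$ meaning the left end. There are $\sum_{r=u+1}^{j}\alpha^{-c}_r=h_\alpha(i,j)$ such slots. Without the block lemma in particular, the count is asserted rather than proved.
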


\Cref{thm:hook_count} is arguably the most powerful property of stack-sorting diagrams. Two natural consequences are 
\[\frac{1}{n!}\sum_{\alpha\models n}N(\alpha) = 1\]
and
\[\frac{1}{n!}\sum_{\alpha\models n}N(\alpha)\,w(\alpha) = \mathcal{D}_{n}'.\]

\subsection*{Corners}
For every composition $\alpha\models n$, an \emph{inner corner} of $D(\alpha)$ is a minimal element of $(D(\alpha),\ge_D)$. Let $\inn(D(\alpha))$ denote the set of all inner corners of $D(\alpha)$, and for all $c=(\alpha_j, j)\in \inn(D(\alpha))$, define $\alpha^{-c}$ to be the composition obtained by decrementing $\alpha_j$ by one and removing part $j$ if it becomes zero. Likewise, for every composition $\beta\models n-1$, an \emph{outer corner} of $D(\beta)$ is a cell $c\in \mathbb{N}^2\setminus D(\beta)$ such that there exists a composition $\beta^{+c}\models n$ for which $c$ is an inner corner of $D(\beta^{+c})$ and $\beta = (\beta^{+c})^{-c}$. Let $\out(D(\beta))$ denote the set of all outer corners of $D(\beta)$.
\begin{example}
The inner and outer corners of $D(2,3,1)$ are labeled blue and red respectively in the diagram below.
\begin{figure}[ht]
\centering
\newcommand\dcell[3]{\draw[#3] (#1-1,-#2) rectangle (#1,1-#2);}
\begin{tikzpicture}[scale=0.85,
  base/.style  = {draw=black, line width=0.4pt},
  inner/.style = {draw=blue!55!black, line width=0.6pt, fill=blue!22},
  outer/.style = {draw=red!72!black, line width=0.8pt, dashed, fill=red!9}]
  \dcell{1}{1}{base}  \dcell{2}{1}{base}
  \dcell{1}{2}{base}  \dcell{2}{2}{base}
  \dcell{3}{2}{inner} \dcell{1}{3}{inner}
  \dcell{4}{2}{outer} \dcell{2}{3}{outer} \dcell{1}{4}{outer}
\end{tikzpicture}
\label{fig:corners}
\end{figure}
\end{example}

\section{Proof of the Main Result}\label{sec:main}
In this section, we prove \Cref{thm:main} and \Cref{cor:lb} by understanding $\mathcal{D}_n'$ in the language of stack-sorting diagrams. We begin by proving some recursive identities about $N(\alpha)$ and $h_{\alpha}$.

\begin{lemma}\label{lem:factorization}
For all $\alpha\models n$ and $c\in\inn(D(\alpha))$, it holds that
\[\prod_{(i,j)\in D(\alpha)}h_\alpha(i,j) \;=\; h_\alpha(c)\,\prod_{(i,j)\in D(\alpha^{-c})}h_{\alpha^{-c}}(i,j).\]
\end{lemma}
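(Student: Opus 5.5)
The plan is to compare the two products cell by cell. We show that removing the inner corner $c$ leaves every other hook length unchanged, so the two sides differ exactly by the factor $h_\alpha(c)$. Write $c=(a,j)$ with $a=\alpha_j$.

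First I would record what minimality of $c$ in $(D(\alpha),\ge_D)$ means. Since $(a,j)\ge_D(i',j')$ iff $a\le i'$ and $j\le j'$, minimality says no cell $(i',j')\neq c$ has $i'\ge a$ and $j'\ge j$. Hence $\alpha_{j'}<a$ for all $j'>j$. In particular, if $a=1$ there can be no row below $j$, since every part is positive. So when part $j$ is deleted, it is the last part and no rows are re-indexed. Therefore in every case $D(\alpha^{-c})=D(\alpha)\setminus\{c\}$ as sets of cells, with $\alpha^{-c}$ agreeing with $\alpha$ except in row $j$, where the part drops from $a$ to $a-1$ (or disappears).

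It then suffices to show $h_{\alpha^{-c}}(i,k)=h_\alpha(i,k)$ for every $(i,k)\in D(\alpha)\setminus\{c\}$. Cells with $i=1$ are trivial, so take $i>1$. There are two things to check.

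\emph{The value $u(i,k)$ is unchanged.} The quantity $u_\alpha(i,k)$ depends only on the truth of the conditions $\alpha_{j''}\ge i-1$ for $j''<k$. The only part that changes is $\alpha_j$, going from $a$ to $a-1$. That changes the truth of $\alpha_j\ge i-1$ only when $i-1=a$, and it matters only when $j<k$. But for $k>j$ we have $i\le\alpha_k<a<a+1$, so this never occurs.

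\emph{The cell count is unchanged.} Given the same $u:=u_\alpha(i,k)$, the hook length counts cells $(i',j')$ with $i'\le i-1$ and $u<j'\le k$. The two diagrams differ only in the cell $c$, so we need that $c$ is never counted, i.e. never both $a\le i-1$ and $u<j\le k$.
\begin{itemize}
\item If $k<j$, row $j$ is outside the row range.
\item If $k=j$, then $i\le a-1$ since $(i,k)\ne c$, so $a>i-1$.
\item If $k>j$, then $i\le\alpha_k<a$.
\end{itemize}
In every case $c$ is not counted.

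Combining these, the multiset of hook lengths of $D(\alpha^{-c})$ equals that of $D(\alpha)$ with $h_\alpha(c)$ removed, which gives the stated factorization. The main point needing care is the first step: that removing a part of size one causes no re-indexing of rows. That is exactly where minimality of $c$ is used, and once it is settled the rest is the case check above.
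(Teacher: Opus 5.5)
Your proposal is correct and takes the same approach as the paper: show $h_{\alpha^{-c}}(i,k)=h_\alpha(i,k)$ on every surviving cell by checking that $u(i,k)$ is unchanged and that $c$ never lies in the counted region, both via minimality of $c$. Your version is more explicit than the paper's terse argument, notably in checking that deleting a part of size one causes no re-indexing of rows, so $D(\alpha^{-c})=D(\alpha)\setminus\{c\}$; the paper leaves this implicit.
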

\begin{proof}
It suffices to prove $h_{\alpha}(i,j)=h_{\alpha^{-c}}(i,j)$ for all $(i,j)\in D(\alpha^{-c})$. Now, note that
\[\{j'\in [j-1]: \alpha_{j'}\ge i-1\} = \{j'\in [j-1]: \alpha^{-c}_{j'}\ge i-1\}\]
and
\[u_{\alpha}(i,j) = u_{\alpha^{-c}}(i,j)\]
because $c$ is a minimal element of $D(\alpha)$ and a difference between the two sets would imply $(i,j)\le_{D} c$. It subsequently follows that
\[c\notin \{(i',j')\in D(\alpha): i'\le i-1, u_{\alpha}(i,j)<j'\le j\},\]
so $h_{\alpha}(i,j)=h_{\alpha^{-c}}(i,j)$ as desired.
\end{proof}

\begin{lemma}\label{lem:corner}
For all $\alpha\models n$, it holds that
\[N(\alpha) = \sum_{c\,\in\,\inn(D(\alpha))} N(\alpha^{-c})\,h_\alpha(c).\]
\end{lemma}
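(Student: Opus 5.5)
The plan is to combine \Cref{thm:hook_count} with \Cref{lem:factorization}. The key combinatorial input is the standard recursion for linear extensions of a finite poset, obtained by removing the element labelled $1$. Throughout we treat $\alpha\models n$ with $n\ge 2$; the base case $n=1$ is handled by convention, taking $N(\emptyset)=1$, since the single inner corner $(1,1)$ has hook length $1$.

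\textbf{Step 1: recursion for linear extensions.} Let $f\in\mathcal{L}(D(\alpha))$, and put $c:=f(1)$. The cell $c$ must be a minimal element of $(D(\alpha),\ge_D)$: if $c\ge_D d$ for some $d\ne c$, then $1\ge f^{-1}(d)$, which is impossible. Hence $c\in\inn(D(\alpha))$.

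Removing $c$ and relabelling $x\mapsto x-1$ gives a linear extension of $D(\alpha)\setminus\{c\}$ with the induced order. We must check that this poset is exactly $(D(\alpha^{-c}),\ge_D)$. Write $c=(\alpha_j,j)$. Removing the last cell of row $j$ gives the diagram of $\alpha$ with $\alpha_j$ decremented. If $\alpha_j=1$, part $j$ disappears and every later row shifts up by one. In that case we must check that the order on the remaining cells is unchanged, and this holds because $\ge_D$ compares rows only through their relative order, which the shift preserves.

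Conversely, any inner corner $c$ together with any $g\in\mathcal{L}(D(\alpha^{-c}))$ produces a linear extension of $D(\alpha)$: send $1\mapsto c$ and $x\mapsto g(x-1)$. This is order-preserving because $c$ is minimal. The two constructions are mutually inverse, so
\[|\mathcal{L}(D(\alpha))| = \sum_{c\in\inn(D(\alpha))}|\mathcal{L}(D(\alpha^{-c}))|.\]

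\textbf{Step 2: combine.} Multiply the recursion from Step 1 by $\prod_{D(\alpha)}h_\alpha$. For each term, apply \Cref{lem:factorization} with the same $c$ to rewrite
\[\prod_{D(\alpha)}h_\alpha = h_\alpha(c)\prod_{D(\alpha^{-c})}h_{\alpha^{-c}}.\]
By the definition of $N$, the summand then becomes $N(\alpha^{-c})\,h_\alpha(c)$, which is the claimed identity.

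\textbf{Main obstacle.} The only subtle point is the row-deletion case $\alpha_j=1$. There we need the poset identification in Step 1, and we need \Cref{lem:factorization} to be valid under the corresponding reindexing of rows. The lemma is stated for exactly this definition of $\alpha^{-c}$, so it can be used as given. The remaining work is the order-isomorphism check for $D(\alpha)\setminus\{c\}\cong D(\alpha^{-c})$, which is routine.
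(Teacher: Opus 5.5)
Your proof is correct and follows the paper's argument. You locate the label $1$ to get $|\mathcal{L}(D(\alpha))| = \sum_{c\in\inn(D(\alpha))}|\mathcal{L}(D(\alpha^{-c}))|$, multiply through by the hook product, and simplify termwise via \Cref{lem:factorization}; your row-deletion worry also never materializes, because all parts are positive and so $(1,j)\ge_D(1,j+1)$ whenever $j<\ell(\alpha)$, which means a length-one row contains an inner corner only when $j=\ell(\alpha)$, hence $D(\alpha^{-c})=D(\alpha)\setminus\{c\}$ literally and no rows ever shift.
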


\begin{proof}
By casework on the assignment of the least element, it is well known that the number of linear extensions of a poset $(D(\alpha),\ge_D)$ can be decomposed into 
\[|\mathcal{L}(D(\alpha))| = \sum_{c\in\inn(D(\alpha))}|\mathcal{L}(D(\alpha^{-c}))|.\]
Multiplying both sides by $\prod_{(i,j)\in D(\alpha)}h_\alpha(i,j)$ and applying \Cref{lem:factorization}, we obtain the identity
\begin{align*}
N(\alpha) &= \sum_{c\in\inn(D(\alpha))}\left(|\mathcal{L}(D(\alpha^{-c}))|\cdot \prod_{(i,j)\in D(\alpha^{-c})}h_{\alpha^{-c}}(i,j)\right)\cdot h_\alpha(c) \\
&= \sum_{c\in\inn(D(\alpha))} N(\alpha^{-c})\,h_{\alpha}(c).\qedhere
\end{align*}
\end{proof}

\begin{definition}\label{def:a}
For every composition $\beta\models n$, let $j_{\max}(\beta)$ denote the greatest index
$j$ for which $\beta_j=w(\beta)$, and define
\[t(\beta) := \sum_{j=1}^{j_{\max}(\beta)}\beta_j = \bigl|\{(i,j)\in D(\beta)\mid j\le j_{\max}(\beta)\}\bigr|.\]
Note that $t(\beta)$ counts the cells of $D(\beta)$ up to and including its last longest row.
\end{definition}

\begin{lemma}\label{lem:hooksum}
For all $\beta\models n-1$, it holds that
\[\sum_{c\,\in\,\out(D(\beta))} h_{\beta^{+c}}(c) = n\]
and
\[\sum_{\substack{c\,\in\,\out(D(\beta)) \\ w(\beta^{+c})=w(\beta)+1}} h_{\beta^{+c}}(c) = t(\beta).\]
\end{lemma}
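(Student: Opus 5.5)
The plan is to classify the outer corners of $D(\beta)$ explicitly and compute each hook length directly from the definition of $h$; both identities then come out of a telescoping sum. Write $\beta=(\beta_1,\ldots,\beta_k)\models n-1$ and assume $k\ge 1$ (that is, $n\ge 2$). For $n=1$, $\beta$ is empty and $t(\beta)=0$, so the second identity needs a convention there; I will note that edge case separately.

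\emph{Step 1: classify $\out(D(\beta))$.} Adding one cell to $D(\beta)$ to get a composition diagram can be done in only two ways. One way is to extend some row $j$ by the cell $(\beta_j+1,j)$. The other is to insert a new part of size $1$ at some position, giving a cell $(1,j)$. A cell $c=(i,j)$ is an inner corner of $D(\beta^{+c})$ exactly when no other cell $(i',j')$ has $i'\ge i$ and $j'\ge j$. A new row of length one therefore qualifies only if it is inserted as the last row, giving the cell $(1,k+1)$; any later row would contain a cell $(1,j')$ with $j'>j$. The extension $(\beta_j+1,j)$ qualifies exactly when $\beta_{j'}\le\beta_j$ for all $j'>j$. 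So the outer corners are $(1,k+1)$ together with $(\beta_j+1,j)$ for $j$ ranging over the weak right-to-left maxima $j_1<j_2<\cdots<j_m=k$ of $\beta$. Along these indices the values $\beta_{j_r}$ are weakly decreasing.

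\emph{Step 2: compute the hook lengths.} For $c=(1,k+1)$ the definition gives $h=1$. For $c=(\beta_{j_r}+1,j_r)$, the quantity $u$ is the largest $j'<j_r$ with $\beta_{j'}\ge\beta_{j_r}$. I claim $u=j_{r-1}$, with the convention $j_0=0$.
\begin{itemize}
\item The index $j_{r-1}$ satisfies the condition, because it is a weak right-to-left maximum lying before $j_r$.
\item Any $j'$ with $j_{r-1}<j'<j_r$ is not a weak right-to-left maximum. Hence $\beta_{j'}$ is strictly smaller than $\max_{j''>j'}\beta_{j''}$.
\item That maximum is attained at a weak right-to-left maximum lying at or after $j_r$, so it equals $\beta_{j_r}$. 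Thus $\beta_{j'}<\beta_{j_r}$, which gives the claim.
\end{itemize}
All rows strictly between $j_{r-1}$ and $j_r$ therefore have length less than $\beta_{j_r}$, and row $j_r$ has length exactly $\beta_{j_r}$. So the window of columns $1,\ldots,\beta_{j_r}$ contains these rows entirely, and
\[h_{\beta^{+c}}(c)=\sum_{j'=j_{r-1}+1}^{j_r}\beta_{j'}.\]

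\emph{Step 3: sum.} Summing over $r=1,\ldots,m$ telescopes to $\sum_{j=1}^{k}\beta_j=n-1$, and adding the $1$ from $(1,k+1)$ gives $n$. For the second identity, the corner $(1,k+1)$ does not increase the width when $k\ge1$. An extension $(\beta_{j_r}+1,j_r)$ increases the width exactly when $\beta_{j_r}=w(\beta)$. Among the weak right-to-left maxima, only $j_1$ can attain the maximum, and $j_1$ is the last index attaining $w(\beta)$, namely $j_{\max}(\beta)$. Its hook length is $\sum_{j=1}^{j_{\max}(\beta)}\beta_j=t(\beta)$, as required.

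The main obstacle is the careful identification $u=j_{r-1}$ in Step 2, together with checking that the rectangle counted by $h$ contains whole rows. This is where the inner-corner characterization from Step 1 and the definition of $u_\alpha$ interact, and I would verify it carefully, including ties among equal row lengths.
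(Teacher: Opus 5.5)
Your Steps 1 and 2 and the first identity are correct and follow the paper's argument: the same classification of outer corners via weak right-to-left maxima plus $(1,\ell(\beta)+1)$, the same identification $u=j_{r-1}$, and the same telescoping. The problem is in Step 3, where you claim that among the weak right-to-left maxima only $j_1$ can attain $w(\beta)$ and that $j_1=j_{\max}(\beta)$. This is false. Step 1 correctly uses the weak condition $\beta_{j'}\le\beta_j$ for all $j'>j$, so \emph{every} index $j$ with $\beta_j=w(\beta)$ is one of the $j_r$. Moreover, $j_1$ is the \emph{first} index attaining $w(\beta)$, not the last. Your claim would only hold for strict right-to-left maxima, and those would break the first identity.

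When $w(\beta)$ is attained more than once, there are several width-increasing corners. Your case analysis keeps only the corner at $j_1$, whose hook length is $\sum_{j=1}^{j_1}\beta_j$, and this is strictly less than $t(\beta)$. For instance, take $\beta=(2,1,2)$. The corners $(3,1)$ and $(3,3)$ both raise the width, with hook lengths $2$ and $3$, while $t(\beta)=5$.

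The repair uses your Step 2 directly. The values $\beta_{j_r}$ are weakly decreasing and $\beta_{j_1}=w(\beta)$, so the indices attaining $w(\beta)$ form an initial segment $j_1<\dots<j_p$ of the weak right-to-left maxima, with $j_p=j_{\max}(\beta)$. The width-increasing corners are exactly $c_r=(w(\beta)+1,j_r)$ for $r=1,\dots,p$, and
\[\sum_{r=1}^{p}h_{\beta^{+c_r}}(c_r)=\sum_{r=1}^{p}\sum_{j=j_{r-1}+1}^{j_r}\beta_j=\sum_{j=1}^{j_{\max}(\beta)}\beta_j=t(\beta).\]
This is precisely how the paper concludes, so with this correction your proof coincides with the paper's.
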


\begin{proof}
Let $(j_t)_{t=1}^m$ denote the sequence of weak right-to-left maxima of $\beta$, and set $j_0:=0$. The outer corners of $D(\beta)$ are precisely $c_t:=(\beta_{j_t}+1,j_t)$ for $t\in[m]$ and $c_{m+1}:=(1,\ell(\beta)+1)$. Now, note that for all $t\in [m]$
\[u_{\beta^{+c_t}}(\beta_{j_t}+1,j_t) = \max\left(\{j'\in [j_t-1]:\beta_{j'}\ge \beta_{j_t}\}\cup\{0\}\right) = j_{t-1}\]
by the very definition of $(j_i)_{i=1}^m$, so it follows that
\begin{align*}
h_{\beta^{+c_t}}(c_t)&=|\{(i',j')\in D(\beta^{+c_t}):i'\le\beta_{j_t},\;j_{t-1}<j'\le j_t\}| \\
&=|\{(i',j')\in D(\beta):j_{t-1}<j'\le j_t\}|.
\end{align*}
The intervals $\{(j_{t-1},j_t]\}_{t=1}^m$ partition $[\ell(\beta)]$. Thus, summing $h_{\beta^{+c_t}}(c_t)$ over $t\in[m]$ yields $n-1$, and adding $h_{\beta^{+c_{m+1}}}(c_{m+1})=1$ gives the final sum of $n$.

For the second identity, observe that the condition $w(\beta^{+c_t})=w(\beta)+1$ is equivalent to $\beta_{j_t}=w(\beta)$, so if we let $j_p:=j_{\max}(\beta)$, then the expression simplifies to
\[\sum_{\substack{c\,\in\,\out(D(\beta)) \\ w(\beta^{+c})=w(\beta)+1}} h_{\beta^{+c}}(c) = \sum_{\substack{c\,\in\,\out(D(\beta)) \\ t\in [p]}} h_{\beta^{+c_{t}}}(c_t).\]
The intervals $\{(j_{t-1},j_t]\}_{t=1}^p$ partition $[j_{\max}(\beta)]$, and again,  summing $h_{\beta^{+c_t}}(c_t)$ over $t\in[p]$ yields $t(\beta)$.
\end{proof}

With \Cref{lem:corner} and \Cref{lem:hooksum}, we express the difference $\mathcal{D}_n' - \mathcal{D}_{n-1}'$ cleanly in terms of $N(\beta)$ and $t(\beta)$ for $\beta\models n-1$.
\begin{theorem}\label{thm:increment}
For all $n\ge2$,
\[\mathcal{D}'_n = \mathcal{D}'_{n-1} + \frac{1}{n!}\sum_{\beta\models n-1} N(\beta)\,t(\beta).\]
\end{theorem}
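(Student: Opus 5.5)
We start from the identity $n!\,\mathcal{D}'_n=\sum_{\alpha\models n}N(\alpha)\,w(\alpha)$ noted after \Cref{thm:hook_count}. We expand each $N(\alpha)$ with \Cref{lem:corner}:
\[
n!\,\mathcal{D}'_n=\sum_{\alpha\models n}\sum_{c\in\inn(D(\alpha))}N(\alpha^{-c})\,h_\alpha(c)\,w(\alpha).
\]
The key step is to reindex this double sum. Consider pairs $(\alpha,c)$ with $\alpha\models n$ and $c\in\inn(D(\alpha))$, and pairs $(\beta,c)$ with $\beta\models n-1$ and $c\in\out(D(\beta))$. The maps $(\alpha,c)\mapsto(\alpha^{-c},c)$ and $(\beta,c)\mapsto(\beta^{+c},c)$ should be mutually inverse bijections between these two sets. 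By definition of outer corner, $\beta^{+c}$ has $c$ as an inner corner and $(\beta^{+c})^{-c}=\beta$. Conversely, for an inner corner $c$ of $D(\alpha)$, the cell $c$ is an outer corner of $D(\alpha^{-c})$ witnessed by $\alpha$ itself. For the correspondence to be well defined, $\beta^{+c}$ must be uniquely determined by $\beta$ and $c$. I would check this from the explicit description of outer corners in the proof of \Cref{lem:hooksum}: the corner $c=(\beta_{j_t}+1,j_t)$ forces incrementing part $j_t$, while $c=(1,\ell(\beta)+1)$ forces appending a new part $1$. Note that inserting a part $1$ elsewhere would move existing cells, so these are the only options. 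After the reindexing,
\[
n!\,\mathcal{D}'_n=\sum_{\beta\models n-1}N(\beta)\sum_{c\in\out(D(\beta))}h_{\beta^{+c}}(c)\,w(\beta^{+c}).
\]

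Next I split according to the width. We always have $w(\beta^{+c})=w(\beta)+\mathbf{1}[w(\beta^{+c})=w(\beta)+1]$. The inner sum therefore equals
\[
w(\beta)\sum_{c\in\out(D(\beta))}h_{\beta^{+c}}(c)+\sum_{\substack{c\in\out(D(\beta))\\ w(\beta^{+c})=w(\beta)+1}}h_{\beta^{+c}}(c).
\]
By the two identities of \Cref{lem:hooksum}, this is $n\,w(\beta)+t(\beta)$. Hence
\[
n!\,\mathcal{D}'_n=n\sum_{\beta\models n-1}N(\beta)\,w(\beta)+\sum_{\beta\models n-1}N(\beta)\,t(\beta).
\]
The first sum equals $n\cdot(n-1)!\,\mathcal{D}'_{n-1}=n!\,\mathcal{D}'_{n-1}$ by the same consequence of \Cref{thm:hook_count} applied at size $n-1$. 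Dividing by $n!$ gives the claim.

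The main obstacle is the bijection step, since everything else is direct substitution. Specifically, one must confirm that each $(\beta,c)$ pair arises from exactly one $(\alpha,c)$, so that no term is double counted or missed. This includes the case where removing $c$ deletes a part of size one; such a part must be the last part, because otherwise the removed cell would not be minimal. I would verify this carefully using the structure of minimal elements of $(D(\alpha),\ge_D)$. A minimal cell $(\alpha_j,j)$ requires $\alpha_{j'}<\alpha_j$ for all $j'>j$. Thus a part $\alpha_j=1$ can be an inner corner only when $j=\ell(\alpha)$, which matches the outer corner $(1,\ell(\beta)+1)$.
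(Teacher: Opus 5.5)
Your proposal is correct and follows the same route as the paper: expand $N(\alpha)$ via \Cref{lem:corner}, reindex over pairs $(\beta,c)$ with $c\in\out(D(\beta))$, split according to whether $w(\beta^{+c})=w(\beta)+1$, and apply both identities of \Cref{lem:hooksum}. You also check explicitly that $(\alpha,c)\mapsto(\alpha^{-c},c)$ is a bijection onto the outer-corner pairs, including the point that a part equal to $1$ can be an inner corner only when it is the last part; this fills in a step the paper performs without comment.
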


\begin{proof}
First, by \Cref{lem:corner}, we deduce
\begin{align*}
\mathcal{D}'_n
  &= \frac{1}{n!}\sum_{\alpha\models n}N(\alpha)\,w(\alpha) \\
  &= \frac{1}{n!}\sum_{\substack{\alpha\models n \\ c\,\in\,\inn(D(\alpha))}} N(\alpha^{-c})\,h_\alpha(c)\,w(\alpha) \\
  &= \frac{1}{n!}\sum_{\substack{\beta\models n-1 \\ c\in \out(D(\beta))}} N(\beta)\,h_{\beta^{+c}}(c)\,w(\beta^{+c}).
\end{align*}
Now observe that $w(\beta^{+c})$ is either $w(\beta)$ or $w(\beta)+1$, so by casework and invoking \Cref{lem:hooksum}, the expression further reduces to
\begin{align*}
&\frac{1}{n!}\sum_{\beta\models n-1}N(\beta)\left(w(\beta)\sum_{c\in \out(D(\beta))}h_{\beta^{+c}}(c) + \sum_{\substack{c\,\in\,\out(D(\beta)) \\ w(\beta^{+c})=w(\beta)+1}} h_{\beta^{+c}}(c)\right) \\
= &\frac{1}{n!}\sum_{\beta\models n-1}N(\beta)\bigl(w(\beta)\, n+t(\beta)\bigr) \\
= &\mathcal{D}'_{n-1} + \frac{1}{n!}\sum_{\beta\models n-1}N(\beta)\,t(\beta).
\end{align*}
\end{proof}

\begin{lemma}\label{lem:floor}
The sequence $(\mathcal{D}'_{n-1}/n)_{n=2}^{\infty}$ is monotonically increasing.
\end{lemma}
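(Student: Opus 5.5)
We need $\mathcal{D}'_{n-1}/n \le \mathcal{D}'_n/(n+1)$, i.e. $(n+1)\mathcal{D}'_{n-1} \le n\mathcal{D}'_n$. Substituting \Cref{thm:increment}, this is equivalent to
\[\mathcal{D}'_{n-1} \le \frac{n}{n!}\sum_{\beta\models n-1}N(\beta)\,t(\beta) = \frac{1}{(n-1)!}\sum_{\beta\models n-1}N(\beta)\,t(\beta).\]
By the consequence of \Cref{thm:hook_count} stated in the preliminaries, with $n-1$ in place of $n$, we have $\mathcal{D}'_{n-1} = \frac{1}{(n-1)!}\sum_{\beta\models n-1}N(\beta)\,w(\beta)$. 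So it suffices to prove the termwise inequality $w(\beta)\le t(\beta)$ for every composition $\beta\models n-1$. Since $N(\beta)\ge 0$, summing this inequality gives the result.

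The termwise inequality is immediate from \Cref{def:a}. Since $t(\beta)=\sum_{j=1}^{j_{\max}(\beta)}\beta_j$ is a sum of positive integers, it is at least its last term $\beta_{j_{\max}(\beta)}=w(\beta)$.

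So the plan is: rewrite the monotonicity condition algebraically, apply \Cref{thm:increment}, express $\mathcal{D}'_{n-1}$ as a weighted sum over compositions, and compare term by term. The only point requiring care is the bookkeeping of factorials, namely checking that $n/n! = 1/(n-1)!$ matches the normalization of $\mathcal{D}'_{n-1}$ exactly. There is no genuine obstacle beyond that.

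One could further note that strict increase holds whenever some $\beta$ with $N(\beta)>0$ has a row before its last longest row. However, the lemma only requires weak monotonicity.
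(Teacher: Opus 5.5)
Your proof is correct and is essentially the paper's argument. Both insert the termwise bound $t(\beta)\ge w(\beta)$ into \Cref{thm:increment} and use $\frac{1}{n!}\sum_{\beta\models n-1}N(\beta)\,w(\beta)=\frac{1}{n}\mathcal{D}'_{n-1}$ to obtain $\mathcal{D}'_n\ge\frac{n+1}{n}\mathcal{D}'_{n-1}$; you reduce the target inequality first, while the paper substitutes the bound directly.
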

\begin{proof}
Since all parts of $\beta$ are positive, $t(\beta)=\sum_{j=1}^{j_{\max}(\beta)}\beta_j\ge\beta_{j_{\max}(\beta)}=w(\beta)$ for all $\beta\models n-1$. Substituting this inequality into \Cref{thm:increment} yields
\[\mathcal{D}'_n \;\ge\; \mathcal{D}'_{n-1} + \frac{1}{n!}\sum_{\beta\models n-1}N(\beta)\,w(\beta)=\; \frac{n+1}{n}\,\mathcal{D}'_{n-1}\]
for all $n\ge2$.
\end{proof}

\begin{proof}[Proof of \Cref{thm:main}]
By \Cref{lem:floor}, $(\mathcal{D}'_{n-1}/n)_{n=2}^{\infty}$ is a monotonically increasing sequence bounded above by $1$, so the limit \[L^*:=\lim_{n\to\infty}\frac{\mathcal{D}_{n-1}'}{n}\] exists.  It shortly follows that
\[\lim_{n\to\infty}\frac{\mathcal{D}'_n}{n} = \lim_{n\to\infty}\frac{n+1}{n}\cdot\frac{\mathcal{D}_{n}'}{n+1} = L^*\]
and
\[\lim_{n\to\infty}\frac{\mathcal{D}_n}{n} = \lim_{n\to\infty}\frac{\mathcal{D}'_n}{n} + \lim_{n\to\infty}\left(\frac{\mathcal{D}_n}{n}-\frac{\mathcal{D}'_n}{n}\right) = L^*.\]
\end{proof}

\Cref{cor:lb} follows immediately from \Cref{lem:floor} and the proof of \Cref{thm:main}. Below, we provide the computed values of $\mathcal{D}'_{n-1}/n$ for $n\in [2, 38]$. Every element of the sequence is a lower bound for $L^*$.

\begin{table}[ht]
\centering
\renewcommand{\arraystretch}{1.2}
\begin{tabular}{r c | r c | r c | r c}
\toprule
$n$ & $\mathcal{D}'_{n-1}/n$ & $n$ & $\mathcal{D}'_{n-1}/n$ & $n$ & $\mathcal{D}'_{n-1}/n$ & $n$ & $\mathcal{D}'_{n-1}/n$ \\
\midrule
 2 & $0.500000$ & 12 & $0.649764$ & 22 & $0.693840$ & 32 & $0.715554$ \\
 3 & $0.500000$ & 13 & $0.656212$ & 23 & $0.696631$ & 33 & $0.717158$ \\
 4 & $0.541667$ & 14 & $0.662037$ & 24 & $0.699250$ & 34 & $0.718689$ \\
 5 & $0.566667$ & 15 & $0.667288$ & 25 & $0.701709$ & 35 & $0.720153$ \\
 6 & $0.586111$ & 16 & $0.672084$ & 26 & $0.704027$ & 36 & $0.721554$ \\
 7 & $0.601190$ & 17 & $0.676460$ & 27 & $0.706214$ & 37 & $0.722895$ \\
 8 & $0.614137$ & 18 & $0.680492$ & 28 & $0.708283$ & 38 & $0.724182$ \\
 9 & $0.624909$ & 19 & $0.684206$ & 29 & $0.710242$ &    &            \\
10 & $0.634330$ & 20 & $0.687653$ & 30 & $0.712103$ &    &            \\
11 & $0.642494$ & 21 & $0.690853$ & 31 & $0.713870$ &    &            \\
\bottomrule
\end{tabular}
\end{table}

\section{Future directions}\label{sec:future}

We conclude by discussing several potential future directions. First, numerical evidence suggests that the sequence $(\mathcal{D}_{n}' - \mathcal{D}'_{n-1})_{n=2}^{\infty}$ is monotonically increasing, so we accordingly make \Cref{conj:r}. 
\begin{conjecture}\label{conj:r}
The sequence $(\mathcal{D}_{n}' - \mathcal{D}'_{n-1})_{n=2}^{\infty}$ is monotonically increasing.
\end{conjecture}
Recall that by \Cref{thm:increment} the difference $\mathcal{D}_{n}'-\mathcal{D}'_{n-1}$ is proportional to the expected value of $t(\alpha_{\pi})$ for $\pi\in S_{n-1}'$. We believe that \Cref{conj:r} is substantially more difficult than \Cref{thm:main} because it requires more information about the distribution of $D(\alpha_{\pi})$ beyond the crude bound $t(\alpha_{\pi})\ge w(\alpha_{\pi})$.

More generally, it would be nice to obtain a bound for 
\[\left|L^*-\frac{\mathcal{D}'_{n-1}}{n}\right|\]
in terms of $n$ to understand the convergence rate of $(\mathcal{D}'_{n-1}/n)_{n=2}^{\infty}$. It might also be fruitful to explore sharper asymptotics for $\mathcal{D}_n$.

\bibliographystyle{plain}
\bibliography{bib_monotonicity}

\end{document}